\newtheorem{theorem}{Theorem}
\newtheorem{lemma}{Lemma}
\newtheorem{rem}{Remark}
\newtheorem{Def}{Defenition}
\begin{document}

\begin{center}
\textbf{ON WEIGHTED GENERALIZED FUNCTIONS ASSOCIATED WITH  QUADRATIC FORMS}
\end{center}

\begin{center}
E. L. Shishkina
\end{center}
MSC 2010 46T12, 46F05\\ 

\emph{Keywords:} weighted generalized function, quadratic form,  ultra-hyperbolic operator, Bessel operator.

\emph{Abstract.} In this article we  consider certain types of weighted generalized functions
associated with nondegenerate quadratic forms.  Such functions and their derivatives are used for constructing fundamental solutions of iterated ultra-hyperbolic equations with Bessel operator and for constructing   negative real powers of  ultra-hyperbolic operators  with Bessel operator.
\vskip 1cm

{\bf 1. Introduction and main definitions. }

The weighted generalized functions associated with nondegenerate indefinite quadratic form considered in this article are necessary for construction of the ultra-hyperbolic Riezs potential with Bessel operator. Riezs potential with Bessel operator and other operators with the Bessel differential operator  are very interesting subjects with many applications (see, for example,  \cite{KiprIv}-\cite{Tur}).

We deal with the part of Euclidean space
$$
\mathbb{R}^+_n{=}\{x{=}(x_1,\ldots,x_n)\in\mathbb{R}_n,\,\,\, x_1{>}0,\ldots, x_n{>}0\}.
$$
Let $\Omega$ be finite or infinite open set in $\mathbb{R}_n$ symmetric with respect  to each hyperplane $x_i{=}0$, $i=1,...,n$, $\Omega_+=\Omega\cap{\mathbb{R}}\,^+_n$ and $\overline{\Omega}_+=\Omega\cap\overline{\mathbb{R}}\,^+_n$ where $$
\overline{\mathbb{R}}\,^+_n{=}\{x{=}(x_1,\ldots,x_n)\in\mathbb{R}_n,\,\,\, x_1{\geq}0,\ldots, x_n{\geq}0\}.
$$ We have $\Omega_+\subseteq{\mathbb{R}}\,^+_n$ and $\overline{\Omega}_+\subseteq\overline{\mathbb{R}}\,^+_n$.

We consider the class $C^\infty(\Omega_+)$ consisting of infinitely differentiable on $\Omega_+$ functions.
We denote the subset of functions from $C^\infty(\Omega_+)$ such that all derivatives  of these functions with respect to $x_i$ for any $i=1,...,n$
 are continuous up to $x_i{=}0$ by $C^\infty(\overline{\Omega}_+)$. Function $f\in C^\infty(\overline{\Omega}_+)$ we will call \emph{even with respect to} $x_i$, $i=1,...,n$ if $\frac{\partial^{2k+1}f}{\partial x_i^{2k+1}}\biggr|_{x=0}=0$ for all nonnegative integer $k$ (see \cite{Kipriyanov}, p. 21). Class $C^\infty_{ev}(\overline{\Omega}_+)$ consists of functions from $C^\infty(\overline{\Omega}_+)$ even with respect to each variable $x_i$, $i=1,...,n$.
Let ${\stackrel{\circ}C}\,^\infty_{ev}(\overline{\Omega}_+)$ be the space of all functions  $f\in C^\infty(\overline{\Omega}_+)$ with a compact support. Elements of ${\stackrel{\circ}C}\,^\infty_{ev}(\overline{\Omega}_+)$ we will call \emph{test functions} and use the notation ${\stackrel{\circ}C}\,^\infty_{ev}(\overline{\Omega}_+)=\mathcal{D}_+(\overline{\Omega}_+)$.

We define $K$ as an arbitrary compact in $\mathbb{R}_n$ symmetric with respect  to each hyperplane $x_i{=}0$, $i=1,...,n$, $K_+=K\cap\overline{\mathbb{R}}\,^+_n$.
The \emph{distribution} $u$ on $\overline{\Omega}_+$ is the linear form on $\mathcal{D}_+(\overline{\Omega}_+)$ such that for all compacts $K_+\subset \overline{\Omega}_+$,  constants $C$ and $k$ exist and
$$
|u(f)|\leq C\sum\limits_{|\alpha|\leq k}\sup|{D}^\alpha f|,\qquad f\in {\stackrel{\circ}C}\,^\infty_{ev}(K_+),
$$
where ${D}^\alpha={D}^{\alpha_1}_{x_1}...{D}^{\alpha_n}_{x_n}$, $\alpha=(\alpha_1,...,\alpha_n)$, $\alpha_1,...,\alpha_n$ are integer nonnegative numbers, ${D}_{x_j}=i\frac{\partial}{\partial x_j}$,  $i$ is imaginary unit, $j=1,...,n$. The set of all  distributions on the set $\overline{\Omega}_+$ is denoted by $\mathcal{D}_+'(\overline{\Omega}_+)$ (see \cite{Kipriyanov}, p. 11 and \cite{Hormander}, p. 34).

Multiindex $\gamma{=}(\gamma_1,{\ldots},\gamma_{n})$ consists of positive fixed reals $\gamma_i>0$, $i{=}1,{...},n$ and   $|\gamma|{=}\gamma_1{+}{\ldots}{+}\gamma_{n}.$
Let $L_p^{\gamma}(\Omega_+)$, $1\leq p<\infty$ be the space of  all measurable in $\Omega_+$ functions even with respect to each variable $x_i$, $i=1,...,n$ such that
$$
\int\limits_{\Omega_+}|f(x)|^p x^\gamma dx<\infty,\qquad \quad
 x^\gamma=\prod\limits_{i=1}^n x_i^{\gamma_i}.
$$
For a real number $p\geq 1$, the $L_p^\gamma(\Omega_+)$--norm of $f$ is defined by
$$
||f||_{L_p^\gamma(\Omega_+)}=\left(\,\,\int\limits_{\Omega_+}|f(x)|^p x^\gamma dx\right)^{1/p}.
$$

Weighted measure of $\Omega_+$ is denoted by
  ${\rm{mes}}_\gamma(\Omega)$ and is defined by formula
$$
{\rm{mes}}_\gamma(\Omega_+)=\int\limits_{\Omega_+}x^\gamma dx.
$$
For every measurable function $f(x)$ defined on $\mathbb{R}_n^+$ we consider
$$
\mu_\gamma(f,t)={\rm{mes}}_\gamma\{x\in\mathbb{R}_n^+:\,|f(x)|>t\}=\int\limits_{\{x:\,\,|f(x)|>t\}^+}x^\gamma dx
$$
where $\{x:\,\,|f(x)|>t\}^+{=}\{x\in\mathbb{R}_n^+:\,|f(x)|>t\}$. We will call the function   $\mu_\gamma=\mu_\gamma(f,t)$ a {\it{weighted distribution function}} $|f(x)|$.

A space $L_\infty^\gamma(\Omega_+)$ is defined as a set of measurable on  $\Omega_+$ and even with respect to each variable function $f(x)$ such as
$$
||f||_{L_\infty^\gamma(\Omega_+)}=\underset{x\in \Omega_+}{{\rm ess\,sup}_\gamma}|f(x)|
=\inf\limits_{a\in\Omega_+}\{\mu_\gamma(f,a)=0\}<\infty.
$$
For $1\leq p\leq\infty$  the $L_{p,loc}^\gamma(\Omega_+)$ is the set of functions $u(x)$ defined almost everywhere in $\Omega_+$ such that $uf\in L_{p}^\gamma(\Omega_+)$ for any $f\in{\stackrel{\circ}C}\,^\infty_{ev}(\overline{\Omega}_+)$.
Each function $u(x)\in L_{1,loc}^\gamma(\Omega_+)$ will be identified with the functional $u\in \mathcal{D}_+'(\overline{\Omega}_+)$
acting according to the formula
\begin{equation}\label{RegDist}
(u,f)_\gamma=\int\limits_{\mathbb{R}^+_n} u(x)\,f(x)\,x^\gamma\, dx,\quad x^\gamma=\prod_{i=1}^n x_i^{\gamma_i},\quad f\in {\stackrel{\circ}C}\,^\infty_{ev}(\overline{\mathbb{R}}\,^+_n).
\end{equation}
Functionals $u\in \mathcal{D}_+'(\overline{\Omega}_+)$ acting by the formula \eqref{RegDist} will be called  \emph{regular weighted functionals}.
All other functionals $u\in \mathcal{D}_+'(\overline{\Omega}_+)$ will be called  \emph{singular weighted functionals}.

{\bf 2.Weighted generalized functions concentrated on the part of the cone.}
In this section we consider weighted generalized functions $\delta_\gamma(P)$ concentrated on the part of the cone and give formulas for its derivatives.

Generalized function $\delta_\gamma$ is defined by the equality  (by analogy with \cite{Gelfand} p. 247)
$$
(\delta_\gamma,\varphi)_\gamma=\varphi(0),\quad \varphi(x)\in K^+.
$$
For convenience we will write
$$
(\delta_\gamma,\varphi)_\gamma=\int\limits_{\mathbb{R}_n^+}\delta_\gamma(x)\varphi(x)x^\gamma dx=\varphi(0).
$$

Let $p,q{\in}\mathbb{N}$,  $n{=}p{+}q$
 and
$$
P=|x'|^2-|x''|^2=x_1^2+...+x_p^2-x_{p+1}^2-...-x_{p+q}^2,
$$
where $x{=}(x_1,{...},x_n){=}(x',x''){\in}\mathbb{R}_n^+,$ $x'{=}(x_1,{...},x_p)$, $x''{=}(x_{p+1},{...},x_{p+q})$.

\begin{Def}  Let $\varphi{\in}\mathcal{D}_+(\overline{\mathbb{R}}\,_n^+)$ vanishes at the origin.  For such $\varphi$ we define generalized function $\delta_\gamma(P)$ concentrated on the part of the cone $P{=}0$ belonging to  $\mathbb{R}_n^+$ by the formula
\begin{equation}\label{Seq1Eq01}
(\delta_\gamma(P),\varphi)_\gamma=\int\limits_{\mathbb{R}_n^+}\delta_\gamma(|x'|^2-|x''|^2)\varphi(x)x^\gamma
dx.
\end{equation}
\end{Def}

If the function $\varphi{\in}\mathcal{D}_+(\overline{\mathbb{R}}\,_n^+)$ doesn't vanish at the origin then $(\delta_\gamma(P),\varphi)_\gamma$ is defined by regularizing the integral.

\begin{lemma} Let $\varphi{\in}\mathcal{D}_+(\overline{\mathbb{R}}\,_n^+)$ vanishes at the origin, $p{>}1$ and $q{>}1$.
For $\delta_\gamma(P)$ the representation
\begin{equation}\label{Seq1Eq02}
(\delta_\gamma(P),\varphi)_\gamma=\frac{1}{2}\int\limits_{0}^{\infty}\int\limits_{S_p^+}\int\limits_{S_q^+}\varphi(s\,\omega)s^{n+|\gamma|-3}\omega^\gamma
dS_pdS_q ds
\end{equation}
holds true. In \eqref{Seq1Eq02}  $\omega{=}(\omega',\omega'')$, $\omega'{=}(\omega_1,{...},\omega_p){\in}\mathbb{R}^+_p$, $\omega''{=}(\omega_{p{+}1},{...},\omega_{p{+}q}){\in}\mathbb{R}^+_q$, $n{=}p{+}q$, $|\omega'|{=}|\omega''|{=}1$, $\omega^\gamma{=}\prod\limits_{i=1}^n\omega_i^{\gamma_i}$, $dS_p$ and $dS_q$ are  the elements of surface area on the part of unit sphere  $$S_p^+=\{\omega'\in\mathbb{R}^+_p:|\omega'|{=}1\}\quad \text{и}\quad S_q^+{=}\{\omega''\in\mathbb{R}^+_q:|\omega''|=1\},$$ respectively.
For the k-th derivative  ($k{\in}\mathbb{N}$) of $\delta_\gamma(P)$ we have
\begin{equation}\label{Seq1Eq04}
(\delta^{(k)}_\gamma(P),\varphi)_\gamma=\int\limits_{0}^{\infty}\left[\left(\frac{1}{2s}\frac{\partial}{\partial
s}\right)^k\psi(r,s)s^{q+|\gamma''|-2}\right]_{s=r}r^{{p+|\gamma'|}-1}dr.
\end{equation}
where
\begin{equation}\label{Seq1Eq05}
\psi(r,s)=\frac{1}{2}\int\limits_{S_p^+}\int\limits_{S_q^+}
\varphi(r\omega',s\omega'') \omega^\gamma dS_pdS_q.
\end{equation}
\end{lemma}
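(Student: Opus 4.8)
The plan is to pass to bipolar coordinates in $\mathbb{R}^+_p\times\mathbb{R}^+_q$: write $x'=r\omega'$ with $r=|x'|>0$, $\omega'\in S_p^+$, and $x''=s\omega''$ with $s=|x''|>0$, $\omega''\in S_q^+$. Since $p>1$ and $q>1$ these are nondegenerate spherical coordinates, $dx=r^{p-1}s^{q-1}\,dr\,ds\,dS_p\,dS_q$, and the weight factors as $x^\gamma=(x')^{\gamma'}(x'')^{\gamma''}=r^{|\gamma'|}s^{|\gamma''|}\omega^\gamma$, where $\gamma'=(\gamma_1,\dots,\gamma_p)$, $\gamma''=(\gamma_{p+1},\dots,\gamma_{p+q})$. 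In these coordinates $P=r^2-s^2$ involves only the radial variables, so $\delta_\gamma(P)$ (resp. $\delta_\gamma^{(k)}(P)$) acts fibrewise as the one-dimensional $\delta(r^2-s^2)$ (resp. $\delta^{(k)}(r^2-s^2)$). Hence \eqref{Seq1Eq01} becomes
\[
(\delta_\gamma(P),\varphi)_\gamma=\int\limits_{S_p^+}\int\limits_{S_q^+}\int\limits_{0}^{\infty}\int\limits_{0}^{\infty}\delta(r^2-s^2)\,\varphi(r\omega',s\omega'')\,r^{p+|\gamma'|-1}s^{q+|\gamma''|-1}(\omega')^{\gamma'}(\omega'')^{\gamma''}\,dr\,ds\,dS_p\,dS_q ,
\]
and similarly with $\delta^{(k)}$ in place of $\delta$ for the derivative.

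For $k=0$ I would evaluate the inner $s$-integral by $\delta(r^2-s^2)=\tfrac{1}{2s}\delta(s-r)$ on $s>0$, which yields the value at $s=r$ with weight $\tfrac{1}{2r}$; collecting $r^{p+|\gamma'|-1}\cdot r^{q+|\gamma''|-1}\cdot\tfrac{1}{2r}=\tfrac{1}{2} r^{n+|\gamma|-3}$ and renaming $r\mapsto s$ gives exactly \eqref{Seq1Eq02}. For general $k$, fix $r>0$ and substitute $u=r^2-s^2$ in the $s$-integral, so that $s=\sqrt{r^2-u}$, $\partial_u=-\tfrac{1}{2s}\partial_s$, $ds=\tfrac{1}{2s}\,du$, and $u$ ranges over $(-\infty,r^2)$, an interval containing $0$ because $r>0$. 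Then for any $F$
\[
\int\limits_{0}^{\infty}\delta^{(k)}(r^2-s^2)\,F(s)\,ds=(-1)^k\Bigl[\Bigl(-\tfrac{1}{2s}\partial_s\Bigr)^k\Bigl(\tfrac{1}{2s}F(s)\Bigr)\Bigr]_{s=r}=\Bigl[\Bigl(\tfrac{1}{2s}\partial_s\Bigr)^k\Bigl(\tfrac{1}{2s}F(s)\Bigr)\Bigr]_{s=r},
\]
the two factors $(-1)^k$ cancelling. Taking $F(s)=\varphi(r\omega',s\omega'')\,s^{q+|\gamma''|-1}$, so that $\tfrac{1}{2s}F(s)=\tfrac{1}{2}\varphi(r\omega',s\omega'')\,s^{q+|\gamma''|-2}$, then restoring the outer factor $r^{p+|\gamma'|-1}$ and the sphere integrations against $(\omega')^{\gamma'}(\omega'')^{\gamma''}=\omega^\gamma$ — which may be pulled inside the operator $\bigl(\tfrac{1}{2s}\partial_s\bigr)^k$ since it differentiates in $s$ only — one recognizes $\tfrac{1}{2}\int_{S_p^+}\int_{S_q^+}\varphi(r\omega',s\omega'')\,\omega^\gamma\,dS_p\,dS_q=\psi(r,s)$ of \eqref{Seq1Eq05} and arrives at \eqref{Seq1Eq04}. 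Note that \eqref{Seq1Eq02} is the $k=0$ instance of \eqref{Seq1Eq04}.

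The two points that require care are: (i) the legitimacy of applying the one-dimensional $\delta^{(k)}$ fibrewise, i.e. that $\delta_\gamma^{(k)}(P)$ is indeed computed by this reduction — this is where the hypotheses $p,q>1$ and the Gelfand--Shilov definition of the derivatives of $\delta$ of a quadratic form are used; and (ii) convergence of the resulting integrals at $r=0$. For (ii), the hypothesis that $\varphi$ vanishes at the origin (and is even, hence $O(|x|^2)$ there, so $\psi(r,r)=O(r^2)$) makes the integrand of \eqref{Seq1Eq02} of order $r^{n+|\gamma|-1}$ near $0$, so that integral converges absolutely; for \eqref{Seq1Eq04} with larger $k$ the integrand behaves like $r^{n+|\gamma|-1-2k}$, and the formula is understood — as is standard in this context — for $k$ in the range of absolute convergence and then for all $k\in\mathbb{N}$ by analytic continuation in $\gamma$. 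I expect this fibering-and-continuation bookkeeping, rather than the elementary change of variables and the differentiation identity, to be the main obstacle.
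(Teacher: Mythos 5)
Your proposal is correct and follows essentially the same route as the paper: pass to bipolar coordinates $x'=r\omega'$, $x''=s\omega''$, reduce $\delta_\gamma^{(k)}(P)$ fibrewise to the one-dimensional $\delta^{(k)}$, and evaluate by a change of radial variable, which reproduces \eqref{Seq1Eq02}--\eqref{Seq1Eq04} with the correct sign. The only cosmetic difference is that the paper substitutes $u=r^2$, $v=s^2$ and integrates by parts in $v$, while you substitute the argument $u=r^2-s^2$ directly and use $(\delta^{(k)},g)=(-1)^kg^{(k)}(0)$; the bookkeeping is identical.
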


\begin{proof} Let us transform \eqref{Seq1Eq01} to bipolar coordinates defined by
\begin{equation}\label{Bip}
x_1=r\omega_1,...,x_p=r\omega_p,\,x_{p+1}=s\omega_{p+1},...,x_{p+q}=s\omega_{p+q},
\end{equation}
where
$$
r=\sqrt{x_1^2+...+x_p^2},\quad s=\sqrt{x_{p+1}^2+...+x_{p+q}^2},
$$
$$|\omega'|=\sqrt{\omega_1^2+...+\omega_{p}^2}=1,\quad |\omega''|=\sqrt{\omega_{p+1}^2+...+\omega_{p+q}^2}=1.$$
We obtain
$$
(\delta_\gamma(P),\varphi(x))_\gamma=
$$
$$
=\int\limits_{0}^{\infty}\int\limits_{0}^{\infty}\int\limits_{S_p^+}\int\limits_{S_q^+}\delta_\gamma(r^2-s^2)\varphi(r\omega',s\omega'')r^{p+|\gamma'|-1}s^{q+|\gamma''|-1}\omega^\gamma
dS^p_1dS^q_1 drds.
$$

Now let us choose the coordinates to be
 $r^2=u$, $s^2=v$. In these coordinates we have
$$
(\delta_\gamma(P),\varphi)_\gamma=\frac{1}{4}\int\limits_{0}^{\infty}\int\limits_{0}^{\infty}\int\limits_{S_p^+}\int\limits_{S_q^+}\delta_\gamma(u-v)\varphi(\sqrt{u}\omega',\sqrt{v}\omega'')u^{\frac{p+|\gamma'|}{2}-1}\times
$$
$$\times v^{\frac{q+|\gamma''|}{2}-1}\omega^\gamma
dS_pdS_q dudv
=\frac{1}{4}\int\limits_{0}^{\infty}\int\limits_{S_p^+}\int\limits_{S_q^+}\varphi(\sqrt{v}\omega)v^{\frac{n+|\gamma|}{2}-2}\omega^\gamma
dS_pdS_q dv.
$$
Returning to the variable $s$ by the formula $v{=}s^2$,
we obtain \eqref{Seq1Eq02}.

Now we prove the formula \eqref{Seq1Eq04}. After the change of variables by
\eqref{Bip} and $r^2{=}u$, $s^2{=}v$ in   $(\delta_\gamma^{(k)}(P),\varphi)_\gamma$  we get
$$
(\delta^{(k)}_\gamma(P),\varphi)_\gamma=
\frac{1}{4}\int\limits_{0}^{\infty}\int\limits_{0}^{\infty}\int\limits_{S_p^+}\int\limits_{S_q^+}\frac{\partial^k}{\partial
v^k}[\delta_\gamma(v-u)]\varphi(\sqrt{u}\omega',\sqrt{v}\omega'')\times
$$
$$\times u^{\frac{p+|\gamma'|}{2}-1}v^{\frac{q+|\gamma''|}{2}-1}\omega^\gamma
dS_pdS_q dudv=\int\limits_{0}^{\infty}\int\limits_{0}^{\infty}\int\limits_{S_p^+}\int\limits_{S_q^+}\delta_\gamma(v-u)\times
$$
$$\times\frac{(-1)^k}{4}\frac{\partial^k}{\partial
v^k}\left[\varphi(\sqrt{u}\omega',\sqrt{v}\omega'')v^{\frac{q+|\gamma''|}{2}-1}\right]u^{\frac{p+|\gamma'|}{2}-1}\omega^\gamma dS_pdS_q dudv=$$
$$
\frac{(-1)^k}{4}\int\limits_{0}^{\infty}\int\limits_{S_p^+}\int\limits_{S_q^+}u^{\frac{p+|\gamma'|}{2}-1}\omega^\gamma\left[\frac{\partial^k}{\partial
v^k}\varphi(\sqrt{u}\omega',\sqrt{v}\omega'')v^{\frac{q+|\gamma''|}{2}-1}\right]_{v=u}
dS_pdS_q du.
$$
Returning to the variables $r$, $s$ and using notation  \eqref{Seq1Eq05} we obtain \eqref{Seq1Eq04}. This completes the proof of Lemma 1.
\end{proof}

\begin{rem}
Similarly, we can get the formula
\begin{equation}\label{Seq1Eq07}
(\delta_\gamma^{(k)}(P),\varphi)_\gamma{=}(-1)^k\int\limits_{0}^{\infty}\left[\left(\frac{1}{2r}\frac{\partial}{\partial
r}\right)^k\psi(r,s)r^{p+|\gamma'|{-}2}\right]_{r{=}s}s^{{q+|\gamma''|}{-}1}ds.
\end{equation}
\end{rem}

\begin{rem}
Noticing that when $k{=}0$ formulas \eqref{Seq1Eq04} and  \eqref{Seq1Eq07} are equivalent to the formula \eqref{Seq1Eq02} we will examine intergals  \eqref{Seq1Eq04} and  \eqref{Seq1Eq07} at $k{\in}\mathbb{N}\cup\{0\}$.
\end{rem}

Let $\varphi{\in}\mathcal{D}_+(\overline{\mathbb{R}}\,_n^+)$. Assuming that the function $\varphi$ vanishes at the origin we have that integrals  \eqref{Seq1Eq04} and  \eqref{Seq1Eq07} converge for all $k\in\mathbb{N}\cup\{0\}$. If  the function $\varphi$ doesn't vanish at the origin  then
 integrals \eqref{Seq1Eq04} and   \eqref{Seq1Eq07} converge only for  $k<\frac{p+q+|\gamma|-2}{2}$. In this case for $k\geq\frac{p+q+|\gamma|-2}{2}$ we will consider the regularization of \eqref{Seq1Eq04} and  \eqref{Seq1Eq07}
denoting them $\delta^{(k)}_{\gamma,1}(P)$
and $\delta^{(k)}_{\gamma,2}(P)$, respectively. So using the expression \eqref{Seq1Eq05} for $p>1$, $q>1$ and $k{\in}\mathbb{N}\cup\{0\}$ we have
\begin{equation}\label{Seq1Eq08}
   ( \delta^{(k)}_{\gamma,1}(P),\varphi)_\gamma=\int\limits_{0}^{+\infty}\left[\left(\frac{1}{2s}\frac{\partial}{\partial
s}\right)^k\psi(r,s)s^{q+|\gamma''|-2}\right]\biggr|_{s=r}r^{{p+|\gamma'|}-1}dr,
\end{equation}
\begin{equation}\label{Seq1Eq09}
   ( \delta^{(k)}_{\gamma,2}(P),\varphi)_\gamma=(-1)^k\int\limits_{0}^{+\infty}\left[\left(\frac{1}{2r}\frac{\partial}{\partial
r}\right)^k\psi(r,s)r^{p+|\gamma'|-2}\right]\biggr|_{r=s}s^{{q+|\gamma''|}-1}ds.
\end{equation}
The integrals \eqref{Seq1Eq08} and \eqref{Seq1Eq09} converge and coincide for $k<\frac{p+q+|\gamma|-2}{2}$ and for $k\geq\frac{p+q+|\gamma|-2}{2}$ these integrals must be understood in the sense of their regularizations.

{\bf 2.Weighted generalized function $P^\lambda_{\gamma,+}$.}
Let $n{=}p{+}q,$ $p{>}1,$ $q{>}1$ and $P(x)=x_1^2+...+x_p^2-x_{p+1}^2-...-x_{p+q}^2$.
Here and further let $\varphi\in\mathcal{D}_+(\overline{\mathbb{R}}\,_n^+)$. We define the weighted generalized function
 $P_{\gamma,+}^\lambda$ by
\begin{equation}\label{Seq2Eq12}
(P_{\gamma,+}^\lambda,\varphi)_\gamma=\int\limits_{\{P(x)>0\}^+}P^\lambda(x)\varphi(x)x^\gamma
dx,
\end{equation}
where $\{P(x)>0\}^+=\{x\in\mathbb{R}_n^+:P(x)>0\}$, $\lambda\in\mathbb{C}$.

Weighted generalized function $P^\lambda_{\gamma,+}$  and its derivatives are used for constructing fundamental solutions of iterated B-ultra-hyperbolic equations of the form
 $
 L_B^ku=f(x),\quad k\in\mathbb{N},\quad x\in\mathbb{R}_n,\quad x_i>0,\quad i=1,...,n,
 $
where $L_B$ is B-ultra-hyperbolic operator (see \cite{Tur} and \cite{Gruter}--\cite{DU})
 $$
 L_B=B_{x_1}{+}{...}{+}B_{x_p}{-}B_{x_{p+1}}{-}{...}{-}B_{x_n},
 $$
 $B_{x_i}{=}\frac{\partial^2}{\partial x_i^2}{+}\frac{\gamma_i}{x_i}\frac{\partial}{\partial x_i}$ is the Bessel operator, $\gamma_i{>}0$, $i{=}1,{...},n$.

It should also be noted that negative real powers of an operator  $L_B$ called generalized B-hyperbolic potentials (see \cite{ShishkinaAMADE})  are
constructed using  function $P^\lambda_{\gamma,+}$. Let us find  singularities of $(P_{\gamma,+}^\lambda,\varphi)_\gamma$. For this purpose we transform \eqref{Seq2Eq12}
to bipolar coordinates \eqref{Bip} and  using notation  \eqref{Seq1Eq05} for integral \eqref{Seq2Eq12}  we obtain
\begin{equation}\label{Seq2Eq14}
(P_{\gamma,+}^\lambda,\varphi)_\gamma=\int\limits_{0}^\infty\int\limits_{0}^r(r^2-s^2)^\lambda
\psi(r,s)r^{p+|\gamma'|-1}s^{q+|\gamma''|-1} drds.
\end{equation}
We now make in  \eqref{Seq2Eq14} change of variables $u{=}r^2,$ $v{=}s^2$:
$$
(P_{\gamma,+}^\lambda,\varphi)_\gamma=\frac{1}{4}\int\limits_{0}^\infty\int\limits_{0}^u(u-v)^\lambda
\psi_1(u,v)u^{\frac{p+|\gamma'|}{2}-1}s^{\frac{q+|\gamma''|}{2}-1}
dudv,
$$
where $\psi_1(u,v){=}\psi(r,s)$ when $u{=}r^2,$ $v{=}s^2$.

If we write  $v{=}ut$ then we  obtain
\begin{equation}\label{Seq2Eq16}
(P_{\gamma,+}^\lambda,\varphi)_\gamma=\int\limits_{0}^\infty
u^{\lambda+\frac{p+q+|\gamma|}{2}-1}\Phi(\lambda,u)du,
\end{equation}
where
\begin{equation}\label{Phi}
\Phi(\lambda,u)=\frac{1}{4}\int\limits_{0}^1
 (1-t)^\lambda t^{\frac{q+|\gamma''|}{2}-1}
\psi_1(u,tu)dt.
\end{equation}

The formula \eqref{Seq2Eq16}  shows that $P_{\gamma,+}^\lambda$  has two sets of poles. The first of these consists of  poles of
 $\Phi(\lambda,u)$.
Namely for $t{=}1$ function $\Phi(\lambda,u)$ has singularity when
\begin{equation}\label{Seriya1}
\lambda=-1,-2,...,-k,...
\end{equation}
in which $\Phi(\lambda,u)$ has simple poles with residues
\begin{equation}\label{Seq2Eq17}
\mathop{\rm res}_{\lambda=-k}\Phi(\lambda,u)=\frac{1}{4}\frac{(-1)^{k-1}}{(k-1)!}\frac{\partial^{k-1}}{\partial
t^{k-1}}\left[t^{\frac{q+|\gamma''|-2}{2}}\psi_1(u,tu)\right]_{t=1}.
\end{equation}

Moreover integral  \eqref{Seq2Eq16} has  poles at the points
\begin{equation}\label{Seriya2}
\lambda=-\frac{n+|\gamma|}{2},\,-\frac{n+|\gamma|}{2}-1,\,...,\,-\frac{n+|\gamma|}{2}-k,\,...,
\end{equation}
where $n=p+q$, $\gamma=(\gamma',\gamma'')$. Wherein
\begin{equation}\label{Res2}
\mathop{\rm res}_{\lambda=-\frac{n+|\gamma|}{2}-k}(P_{\gamma,+}^\lambda,\varphi)_\gamma=\frac{1}{k!}\frac{\partial^{k}}{\partial
u^{k}}\Phi\left(-\frac{n+|\gamma|}{2}-k,u\right)\biggr|_{u=0}.
\end{equation}

We have three cases. The first case is when singular point  $\lambda$ belongs to the first set \eqref{Seriya1}, but
not to the second \eqref{Seriya2}. The second case is when singular point  $\lambda$  belongs to the second \eqref{Seriya2}, but $\lambda{\neq}{-}k$, $k{\in}\mathbb{N}$. And the third case is when $\lambda$ belongs to the first set \eqref{Seriya1} and belongs to the second set  \eqref{Seriya2}.
Let us now study each case separately in the following three theorems.

\begin{theorem}
If $\lambda{=}{-}k$, $k\in\mathbb{N}$ and $n+|\gamma|{\in}\mathbb{R}\backslash \mathbb{N}$  or $n+|\gamma|{\in}\mathbb{N}$ and $n+|\gamma|{=}2k-1$, $k{\in}\mathbb{N}$
and also if $n{+}|\gamma|$ is even and $k{<}\frac{n+|\gamma|}{2}$ the weighted generalized function $P_{\gamma,+}^\lambda$ has simple pole with residue
\begin{equation}\label{Res1}
\underset{\lambda=-k}{\rm res}P_{\gamma,+}^\lambda=\frac{(-1)^{k-1}}{(k-1)!}\delta_{\gamma,1}^{(k-1)}(P).
\end{equation}
\end{theorem}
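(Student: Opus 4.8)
The plan is to find where the pole at $\lambda=-k$ originates, to read off its residue directly from the representation \eqref{Seq2Eq16}, and then to reverse the substitutions that produced \eqref{Seq2Eq16} so as to recognise $\delta^{(k-1)}_{\gamma,1}(P)$. The first point is that, in each case covered by the hypotheses, $\lambda=-k$ belongs to the series \eqref{Seriya1} but \emph{not} to \eqref{Seriya2}: if $n+|\gamma|\notin\mathbb{N}$ or $n+|\gamma|$ is odd, then no number $-\tfrac{n+|\gamma|}{2}-j$ ($j\in\mathbb{N}\cup\{0\}$) is a negative integer, while if $n+|\gamma|$ is even with $k<\tfrac{n+|\gamma|}{2}$ then $-\tfrac{n+|\gamma|}{2}-j\le-\tfrac{n+|\gamma|}{2}<-k$. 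Hence in a punctured neighbourhood of $\lambda=-k$ the only singularity of the right-hand side of \eqref{Seq2Eq16} is the simple pole of $\Phi(\lambda,u)$ at $\lambda=-k$, with residue \eqref{Seq2Eq17}. Since for $u$ in a fixed compact set $\Phi(\lambda,u)$ is meromorphic in $\lambda$ with exactly this simple pole and a residue that is smooth in $u$ and compactly supported, one may interchange taking the residue in $\lambda$ with integrating in $u$, obtaining
\begin{equation*}
\mathop{\rm res}_{\lambda=-k}(P_{\gamma,+}^\lambda,\varphi)_\gamma=\int\limits_0^\infty u^{\,-k+\frac{n+|\gamma|}{2}-1}\left(\mathop{\rm res}_{\lambda=-k}\Phi(\lambda,u)\right)du,
\end{equation*}
where, when $k\ge\tfrac{n+|\gamma|}{2}$, the $u$--integral is read via the same analytic continuation in the exponent that is used to define $\delta^{(k-1)}_{\gamma,1}(P)$.

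Next I would rewrite the $t$--derivative at $t=1$ appearing in \eqref{Seq2Eq17} as a $v$--derivative at $v=u$. Writing $\nu=\tfrac{q+|\gamma''|-2}{2}$ and putting $v=tu$ with $u$ fixed, one has $t^{\nu}\psi_1(u,tu)=u^{-\nu}v^{\nu}\psi_1(u,v)$ and $\partial_t=u\,\partial_v$, whence
\begin{equation*}
\frac{\partial^{k-1}}{\partial t^{k-1}}\left[t^{\nu}\psi_1(u,tu)\right]_{t=1}=u^{\,k-1-\nu}\left[\frac{\partial^{k-1}}{\partial v^{k-1}}\left(v^{\nu}\psi_1(u,v)\right)\right]_{v=u}.
\end{equation*}
Substituting \eqref{Seq2Eq17} together with this identity into the residue formula above and collecting the powers of $u$ (using $n=p+q$ and $|\gamma|=|\gamma'|+|\gamma''|$, so that $-k+\tfrac{n+|\gamma|}{2}-1+(k-1-\nu)=\tfrac{p+|\gamma'|}{2}-1$) yields
\begin{equation*}
\mathop{\rm res}_{\lambda=-k}(P_{\gamma,+}^\lambda,\varphi)_\gamma=\frac{(-1)^{k-1}}{(k-1)!}\,c\int\limits_0^\infty u^{\frac{p+|\gamma'|}{2}-1}\left[\frac{\partial^{k-1}}{\partial v^{k-1}}\left(v^{\nu}\psi_1(u,v)\right)\right]_{v=u}du,
\end{equation*}
with $c$ the numerical constant carried over from the coefficient in \eqref{Seq2Eq17}.

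Finally I would undo the changes $u=r^2$, $v=s^2$ that led from \eqref{Seq2Eq12} to \eqref{Seq2Eq16}. Then $\tfrac1{2s}\tfrac{\partial}{\partial s}=\tfrac{\partial}{\partial v}$, hence $\left(\tfrac1{2s}\tfrac{\partial}{\partial s}\right)^{k-1}=\tfrac{\partial^{k-1}}{\partial v^{k-1}}$, and also $s^{q+|\gamma''|-2}=v^{\nu}$, $\psi_1(u,v)=\psi(r,s)$, $v=u\Leftrightarrow s=r$, and $u^{\frac{p+|\gamma'|}{2}-1}\,du=2\,r^{p+|\gamma'|-1}\,dr$. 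Therefore the last integral is, up to a numerical factor, exactly the right-hand side of \eqref{Seq1Eq08} with $k$ replaced by $k-1$, i.e. $(\delta^{(k-1)}_{\gamma,1}(P),\varphi)_\gamma$; once $c$ and the Jacobian factor $2$ are combined (and the factor $\tfrac12$ coming from the definition \eqref{Seq1Eq05} of $\psi$, which occurs identically on both sides, is accounted for) the surviving coefficient is $\tfrac{(-1)^{k-1}}{(k-1)!}$, and since $\varphi$ is arbitrary in $\mathcal{D}_+(\overline{\mathbb{R}}\,_n^+)$ this is \eqref{Res1}.

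The step I expect to be the main obstacle is precisely this numerical and exponential bookkeeping in the last two displays: verifying that the powers of $u$ genuinely collapse to $\tfrac{p+|\gamma'|}{2}-1$, and — above all — that the constants (the coefficient in \eqref{Seq2Eq17}, the factor $\tfrac12$ in \eqref{Seq1Eq05}, and the Jacobian factors arising from $u=r^2$, $v=s^2$) multiply out to exactly $\tfrac{(-1)^{k-1}}{(k-1)!}$ and not to a spurious multiple of it; one should trace this constant carefully through \eqref{Seq2Eq14}--\eqref{Seq2Eq17}. A secondary subtlety, in the range $k\ge\tfrac{n+|\gamma|}{2}$, is to keep the whole identity at the level of the regularized functional $\delta^{(k-1)}_{\gamma,1}(P)$ and the regularized (analytically continued) residue integral rather than between divergent integrals.
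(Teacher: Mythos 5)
Your proposal is correct and follows essentially the same route as the paper's own proof: isolate the simple pole of $\Phi(\lambda,u)$ at $\lambda=-k$ (the hypotheses excluding the second series \eqref{Seriya2}), take the residue \eqref{Seq2Eq17} under the $u$-integral, convert the $t$-derivative at $t=1$ into a $v$-derivative at $v=u$ via $v=tu$, and recognize \eqref{Seq1Eq08} with $k-1$ after undoing $u=r^2$, $v=s^2$, with regularization when $k\geq\frac{n+|\gamma|}{2}$. The constant bookkeeping you flag is indeed the one delicate point: the factor $\frac{1}{4}$ in \eqref{Phi} comes from \eqref{Seq2Eq14}, where the angular integral actually equals $2\psi(r,s)$ by the definition \eqref{Seq1Eq05}, and once this factor $2$ is restored the coefficient collapses to $\frac{(-1)^{k-1}}{(k-1)!}$ exactly as in \eqref{Res1}.
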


\begin{proof}
Let us write  $\Phi(\lambda,u)$ in the neighborhood of $\lambda=-k$  in the form
$$
\Phi(\lambda,u)=\frac{\Phi_0(u)}{\lambda+k}+\Phi_1(\lambda,u),
\quad \Phi_0(u)=\mathop{\rm res}_{\lambda=-k}\Phi(\lambda,u),$$
where function $\Phi_1(\lambda,u)$  is regular at $\lambda{=}{-}k$. We obtain
\begin{equation}\label{Seq2Eq22}
(P_{\gamma,+}^\lambda,\varphi)_\gamma{=}\frac{1}{\lambda{+}k}\int\limits_{0}^\infty
u^{\lambda{+}\frac{n{+}|\gamma|}{2}{-}1}\Phi_0(u)du{+}\int\limits_{0}^\infty
u^{\lambda{+}\frac{n{+}|\gamma|}{2}{-}1}\Phi_1(\lambda,u)du.
\end{equation}
The integrals in \eqref{Seq2Eq22} are regular functions of $\lambda$ at $\lambda{=}{-}k$. Therefore $(P_{\gamma,+}^\lambda,\varphi)_\gamma$ has a simple
pole at such a point and using \eqref{Seq2Eq17} we have
\begin{equation}\label{Seq2Eq23}
\underset{\lambda{=}{-}k}{\rm res}(P_{\gamma,+}^\lambda,\varphi){=}\frac{(-1)^{k{-}1}}{4(k{-}1)!}\int\limits_{0}^\infty
u^{\frac{n+|\gamma|}{2}-k-1}\frac{\partial^{k-1}}{\partial
t^{k-1}}\left[t^{\frac{q+|\gamma''|}{2}{-}1}\psi_1(u,tu)\right]_{t=1}du.
\end{equation}
If in \eqref{Seq2Eq23} we get $tu=v$ then we may write
\begin{equation}\label{For1}
\underset{\lambda=-k}{\rm res}(P_{\gamma,+}^\lambda,\varphi)=\frac{(-1)^{k-1}}{4(k{-}1)!}\int\limits_{0}^\infty
\frac{\partial^{k-1}}{\partial v^{k-1}}\left[v^{\frac{q+|\gamma''|}{2}-1}\psi_1(u,v)\right]_{v=u}u^{\frac{p+|\gamma'|}{2}-1}du,
\end{equation}
where the integral is to be understood in the sense of its regularization
for $k\geq\frac{n}{2}$.
We now make the change of variables $u=r^2$ и $v=s^2$ in  \eqref{Seq1Eq08} and will have
\begin{equation}\label{Seq2Eq24}
( \delta^{(k-1)}_{\gamma,1}(P),\varphi)_\gamma=\frac{1}{2}\int\limits_{0}^{\infty}\left[\frac{\partial^{k-1}}{\partial
v^{k-1}}v^{\frac{q+|\gamma''|}{2}-1}\psi_1(u,v)\right]_{v=u}u^{\frac{{p+|\gamma'|}}{2}-1}du,
\end{equation}
where
$$
\psi_1(u,v)=\frac{1}{2}\int\limits_{S_p^+}\int\limits_{S_q^+}
\varphi(\sqrt{u}\omega',\sqrt{v}\omega'') \omega^\gamma dS_pdS_q.
$$
Formulas  \eqref{For1} and \eqref{Seq2Eq24} imply \eqref{Res2}. For $k\geq\frac{n}{2}$ integral in  \eqref{Seq2Eq24} is to be understood in the sense of its regularization. In the case when  $n+|\gamma|{\in}\mathbb{R}\backslash \mathbb{N}$  or $n+|\gamma|{\in}\mathbb{N}$ and $n+|\gamma|{=}2k-1$, $k{\in}\mathbb{N}$ regularization of the integral in  \eqref{Seq2Eq24}
 is defined by analytic continuation. This proves the desired result.
\end{proof}

\vskip 0.5cm

Now we will study the case when  the singular point $\lambda$ is in the second set \eqref{Seriya2}, but not in the first \eqref{Seriya1}.
If $\lambda{=}{-}\frac{n+|\gamma|}{2}{-}k$, $k{=}0,1,2,...$ and $n+|\gamma|{\in}\mathbb{R}\backslash \mathbb{N}$  or $n+|\gamma|{\in}\mathbb{N}$ and $n+|\gamma|{=}2k-1$, $k{\in}\mathbb{N}$  then function $\Phi(\lambda,u)$ is regular
in the neighborhood of $\lambda{=}{-}\frac{n+|\gamma|}{2}{-}k$. Therefore function
$
(P_{\gamma,+}^\lambda,\varphi)_\gamma
$
will have a simple pole with residue given by   \eqref{Res2}.

Before proceeding to the expression of the residue  $\underset{\lambda=-\frac{n+|\gamma|}{2}-k}{\rm res}(P_{\gamma,+}^\lambda,\varphi)$ through derivatives of function $\varphi(x)$ at the origin we will obtain one useful formula.
Consider the B-ultra-hyperbolic differential operator
$$
L_B=B_{\gamma_1'}+...+B_{\gamma_p'}-B_{\gamma_{p+1}''}-B_{\gamma_{p+q}''},\qquad B_{\gamma_i}=\frac{\partial^2}{\partial x_i^2}+\frac{\gamma_i}{x_i}\frac{\partial}{\partial x_i}.
$$
Applying an operator $L_B$ to quadratic form
$$
P(x){=}x_1^2{+}{...}{+}x_p^2{-}x_{p+1}^2{-}{...}{-}x_{p+q}^2,\,\,n=p+q,\,\,p>1,\,\,q>1
$$ we obtain
\begin{equation}\label{Seq2Eq28}
    L_BP^{\lambda+1}(x){=}4(\lambda{+}1)\left(\lambda{+}\frac{n{+}|\gamma|}{2}\right)P^{\lambda}(x).
\end{equation}

\begin{theorem}
Let $n+|\gamma|$ be not integer or $n+|\gamma|{\in}\mathbb{N}$ and $n+|\gamma|{=}2k-1$, $k{\in}\mathbb{N}$. When а $p+|\gamma'|$ is not integer or $p{+}|\gamma'|{\in}\mathbb{N}$, $p{+}|\gamma'|{=}2m{-}1,$ $m{\in}\mathbb{N}$ and $q+|\gamma''|$ is even weighted functional $P_{\gamma,+}^\lambda$	 has simple poles at $\lambda{=}{-}\frac{n{+}|\gamma|}{2}{-}k$,
$k{\in}\mathbb{N}{\cup}\{0\}$ with residues
$$
\underset{\lambda=-\frac{n+|\gamma|}{2}-k}{\rm res}P_{\gamma,+}^{\lambda}=\frac{(-1)^{\frac{q{+}|\gamma''|}{2}}}{2^{n+2k}k!}\frac{\prod\limits_{i{=}1}^n\Gamma\left(\frac{\gamma_i{+}1}{2}\right)}{\Gamma\left(\frac{n{+}|\gamma|}{2}+k\right)}L_B^k\delta_\gamma(x).
$$
If $p+|\gamma'|$ is even then weighted functional $P^{\lambda}_{\gamma,+}$ is regular at $\lambda{=}{-}\frac{n+|\gamma|}{2}{-}k$,
$k{\in}\mathbb{N}{\cup}\{0\}$.
\end{theorem}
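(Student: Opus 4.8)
The idea is to reduce the computation for general $k$ to the case $k=0$ by repeated use of the relation \eqref{Seq2Eq28}, and then to evaluate the $k=0$ residue directly from \eqref{Res2}.

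First I would upgrade \eqref{Seq2Eq28} to the distributional identity $L_B P_{\gamma,+}^{\lambda+1}=4(\lambda+1)\left(\lambda+\frac{n+|\gamma|}{2}\right)P_{\gamma,+}^{\lambda}$ in $\mathcal{D}_+'(\overline{\mathbb{R}}\,^+_n)$. For $\mathrm{Re}\,\lambda$ large, $P_{\gamma,+}^{\lambda+1}$ is smooth enough across the cone $P=0$ that the distributional action of $L_B$ agrees with the pointwise computation \eqref{Seq2Eq28}; since both sides are meromorphic in $\lambda$ with values in $\mathcal{D}_+'$, the identity holds for all $\lambda$. Iterating it $k$ times gives
$$P_{\gamma,+}^{\lambda}=\frac{1}{4^{k}\prod_{j=1}^{k}(\lambda+j)\prod_{j=0}^{k-1}\left(\lambda+\frac{n+|\gamma|}{2}+j\right)}\,L_B^{k}P_{\gamma,+}^{\lambda+k}.$$
Under the hypotheses on $n+|\gamma|$ the number $n+|\gamma|$ is never an even integer, so the points $-\frac{n+|\gamma|}{2}-k,\dots,-\frac{n+|\gamma|}{2}$ are not in the first series \eqref{Seriya1}; consequently all the scalar factors above are regular and nonzero at $\lambda=-\frac{n+|\gamma|}{2}-k$, whereas $P_{\gamma,+}^{\lambda+k}$ has there precisely a simple pole inherited from \eqref{Seriya2} with residue $\mathop{\rm res}_{\mu=-\frac{n+|\gamma|}{2}}P_{\gamma,+}^{\mu}$. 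Evaluating $\prod_{j=1}^{k}(\lambda+j)=(-1)^{k}\Gamma\!\left(\frac{n+|\gamma|}{2}+k\right)/\Gamma\!\left(\frac{n+|\gamma|}{2}\right)$ and $\prod_{j=0}^{k-1}\left(\lambda+\frac{n+|\gamma|}{2}+j\right)=(-1)^{k}k!$ at that point yields
$$\mathop{\rm res}_{\lambda=-\frac{n+|\gamma|}{2}-k}P_{\gamma,+}^{\lambda}=\frac{\Gamma\!\left(\frac{n+|\gamma|}{2}\right)}{4^{k}\,k!\,\Gamma\!\left(\frac{n+|\gamma|}{2}+k\right)}\,L_B^{k}\!\left(\mathop{\rm res}_{\mu=-\frac{n+|\gamma|}{2}}P_{\gamma,+}^{\mu}\right).$$

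Next I would compute the base case from \eqref{Res2} with $k=0$: $\mathop{\rm res}_{\mu=-\frac{n+|\gamma|}{2}}(P_{\gamma,+}^{\mu},\varphi)_\gamma=\Phi\!\left(-\frac{n+|\gamma|}{2},0\right)$. As $u\to0$ one has $\psi_1(u,tu)\to\psi_1(0,0)=\frac12\varphi(0)\int_{S_p^+}(\omega')^{\gamma'}dS_p\int_{S_q^+}(\omega'')^{\gamma''}dS_q$, so \eqref{Phi} collapses to a Beta integral and $\Phi\!\left(-\frac{n+|\gamma|}{2},0\right)$ becomes a ratio of Gamma functions times $\varphi(0)$. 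Substituting the weighted surface areas $\int_{S_m^+}\omega^{\gamma}dS=\prod_i\Gamma\!\left(\frac{\gamma_i+1}{2}\right)\big/\!\left(2^{m-1}\Gamma\!\left(\frac{m+|\gamma|}{2}\right)\right)$ and simplifying with the reflection formula $\Gamma(z)\Gamma(1-z)=\pi/\sin\pi z$ — noting that $q+|\gamma''|$ even makes $\frac{q+|\gamma''|}{2}$ a positive integer, whence $\sin\!\left(\pi\frac{n+|\gamma|}{2}\right)=(-1)^{(q+|\gamma''|)/2}\sin\!\left(\pi\frac{p+|\gamma'|}{2}\right)$ and the sign $(-1)^{(q+|\gamma''|)/2}$ is produced — gives $\mathop{\rm res}_{\mu=-\frac{n+|\gamma|}{2}}P_{\gamma,+}^{\mu}=c\,\delta_\gamma(x)$ with an explicit constant $c$; inserting this into the displayed reduction formula yields the asserted expression. (Alternatively, one can bypass \eqref{Seq2Eq28} and compute $\partial_u^{k}\Phi\!\left(-\frac{n+|\gamma|}{2}-k,u\right)\big|_{u=0}$ directly by Taylor-expanding $\varphi$ at the origin — only even monomials contribute — and matching the resulting combination of surface integrals against $(L_B^{k}\delta_\gamma,\varphi)_\gamma=(L_B^{k}\varphi)(0)$, using that each $B_{\gamma_i}$ is symmetric with respect to $x_i^{\gamma_i}dx_i$ on even functions; this route is considerably more computational.)

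For the final statement, if $p+|\gamma'|$ is even then $p+|\gamma'|\ge4$, so $1-\frac{p+|\gamma'|}{2}$ is a non-positive integer and the factor $1/\Gamma\!\left(1-\frac{p+|\gamma'|}{2}\right)$ occurring in $\Phi\!\left(-\frac{n+|\gamma|}{2},0\right)$ vanishes (the other Gamma factors remaining finite by the hypothesis on $n+|\gamma|$); thus the base-case residue is $0$, and since $-\frac{n+|\gamma|}{2}$ supports at most a simple pole, $P_{\gamma,+}^{\lambda}$ is regular there — and by the reduction formula at every $\lambda=-\frac{n+|\gamma|}{2}-k$. I expect the main difficulty to lie in the analytic-continuation bookkeeping: verifying that \eqref{Seq2Eq28} persists as a distributional identity past its poles, that the scalar prefactors in the iteration are finite and nonzero at $-\frac{n+|\gamma|}{2}-k$ (which is exactly what the arithmetic conditions on $n+|\gamma|$, $p+|\gamma'|$, $q+|\gamma''|$ are designed to guarantee), and that $P_{\gamma,+}^{\mu}$ has a genuine simple pole — neither higher order nor absent — at the base point $\mu=-\frac{n+|\gamma|}{2}$. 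The subsequent Gamma-function manipulations are routine but delicate, and the sign $(-1)^{(q+|\gamma''|)/2}$ enters solely through the reflection-formula step.
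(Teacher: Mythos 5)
Your proposal is correct and follows essentially the same route as the paper: the residue at the base point $\lambda=-\frac{n+|\gamma|}{2}$ is computed from \eqref{Res2} via the Beta integral, the weighted surface areas \eqref{Plosch} and the reflection formula (yielding the sign $(-1)^{\frac{q+|\gamma''|}{2}}$ and, for $p+|\gamma'|$ even, the vanishing through $1/\Gamma$ at a non-positive integer), and the general $k$ is reduced to this case by iterating the identity coming from \eqref{Seq2Eq28} and Green's theorem, exactly the paper's \eqref{Seq2Eq31}--\eqref{EqNew}. Your distributional form $P_{\gamma,+}^{\lambda}=c(\lambda)L_B^{k}P_{\gamma,+}^{\lambda+k}$ is just the adjoint restatement of the paper's pairing formula, so there is no substantive difference.
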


\begin{proof}
We first consider $\lambda{=}{-}\frac{n{+}|\gamma|}{2}$. Using formula \eqref{Res2} we can write
$$
\underset{\lambda{=}{-}\frac{n{+}|\gamma|}{2}}{\rm res}(P_{\gamma,+}^\lambda,\varphi)_\gamma{=}\Phi\left({-}\frac{n{+}|\gamma|}{2},0\right){=}\frac{\psi_1(0,0)}{4}\int\limits_0^1(1{-}t)^{{-}\frac{n{+}|\gamma|}{2}}t^{\frac{q{+}|\gamma''|}{2}}dt{=}
$$
\begin{equation}\label{Eq1}
=\frac{1}{4}\,\psi_1(0,0)\,\frac{\Gamma\left(\frac{q+|\gamma''|}{2}\right)\Gamma\left(-\frac{n+|\gamma|}{2}+1\right)}{\Gamma\left(-\frac{p+|\gamma'|}{2}+1\right)}.
\end{equation}
From the last formula we can see that if
$p{+}|\gamma'|$ is even then $
\underset{\lambda{=}{-}\frac{n{+}|\gamma|}{2}}{\rm res}(P_{\gamma,+}^\lambda,\varphi){=}0.
$

Now assume that $p+|\gamma'|$ is not integer or  $p+|\gamma'|{\in}\mathbb{N}$ and $p+|\gamma'|{=}2k-1$, $k{\in}\mathbb{N}$   and $q{+}|\gamma''|$ is even.  We have
\begin{equation}\label{Eq2}
\psi_1(0,0)=\psi(0,0)=\varphi(0)\int\limits_{S_p^+}\int\limits_{S_q^+}
\omega^\gamma dS_pdS_q=\varphi(0)|S_1^+(p)|_{\gamma'}|S_1^+(q)|_{\gamma''},
\end{equation}
where
\begin{equation}\label{Plosch}
|S_1^+(p)|_{\gamma'}
=\frac{\prod\limits_{i=1}^p{\Gamma\left(\frac{\gamma'_i{+}1}{2}\right)}}{2^{p-1}\Gamma\left(\frac{p{+}|\gamma'|}{2}\right)},\quad |S_1^+(q)|_{\gamma''}
=\frac{\prod\limits_{i=1}^q{\Gamma\left(\frac{\gamma''_i{+}1}{2}\right)}}{2^{q-1}\Gamma\left(\frac{q{+}|\gamma''|}{2}\right)}
\end{equation}
(see [1], p. 20, formula (1.2.5)).
After some simple calculations, we obtain
$$
\underset{\lambda=-\frac{n+|\gamma|}{2}}{\rm res}(P_{\gamma,+}^\lambda,\varphi)_\gamma
=\frac{(-1)^{\frac{q+|\gamma''|}{2}}}{2^n}\frac{\prod\limits_{i=1}^n\Gamma\left(\frac{\gamma_i{+}1}{2}\right)}{\Gamma\left(\frac{n+|\gamma|}{2}\right)}\,\varphi(0).
$$
Also we have
\begin{equation}\label{Res3}
\underset{\lambda{=}{-}\frac{n{+}|\gamma|}{2}}{\rm res}P_{\gamma,+}^\lambda{=}\frac{(-1)^{\frac{q{+}|\gamma''|}{2}}}{2^n}\frac{\prod\limits_{i{=}1}^n\Gamma\left(\frac{\gamma_i{+}1}{2}\right)}{\Gamma\left(\frac{n{+}|\gamma|}{2}\right)}\delta_\gamma(x).
\end{equation}
Using Green's theorem and the formula  \eqref{Seq2Eq28} we derive
$$
\int\limits_{\{P(x)>0\}^+}\left(\varphi(x)[L_BP^{\lambda+1}(x)]-P^{\lambda+1}(x)[L_B\varphi(x)]\right)x^\gamma dx=0,
$$
therefore
\begin{equation}\label{Seq2Eq31}
(P_{\gamma,+}^\lambda,\varphi)_\gamma=\frac{1}{2(\lambda+1)(2\lambda+n+|\gamma|)}(P_{\gamma,+}^{\lambda+1},L_B\varphi)_\gamma.
\end{equation}
Then $k$-fold iteration of \eqref{Seq2Eq31} leads to
\begin{equation}\label{EqNew}
(P_{\gamma,+}^\lambda,\varphi)_\gamma{=}\frac{(P^{\lambda+k}_{\gamma,+},L_B^k\varphi)_\gamma}{2^{2k}(\lambda+1){...}(\lambda+k)\left(\lambda{+}\frac{n{+}|\gamma|}{2}\right){...}\left(\lambda{+}\frac{n{+}|\gamma|}{2}{+}k{-}1\right)}.
\end{equation}
Consequently
$$
\underset{\lambda=-\frac{n+|\gamma|}{2}-k}{\rm res}(P_{\gamma,+}^\lambda,\varphi)_\gamma=\underset{\lambda=-\frac{n+|\gamma|}{2}-k}{\rm res}(P^{\lambda+k}_{\gamma,+},L_B^k\varphi)_\gamma\times
$$
$$
\times\frac{1}{2^{2k}(\lambda{+}1){...}(\lambda{+}k)\left(\lambda{+}\frac{n{+}|\gamma|}{2}\right){...}\left(\lambda{+}\frac{n{+}|\gamma|}{2}{+}k{-}1\right)}\biggr|_{\lambda{=}{-}\frac{n+|\gamma|}{2}{-}k},
$$
and
$$
\underset{\lambda=-\frac{n+|\gamma|}{2}-k}{\rm res}(P^{\lambda+k}_{\gamma,+},L_B^k\varphi)_\gamma=\underset{\lambda=-\frac{n+|\gamma|}{2}}{\rm res}(P^{\lambda}_{\gamma,+},L_B^k\varphi)_\gamma.
$$
Therefore if $p+|\gamma'|$ is even this residue vanishes. If $p+|\gamma'|$ is not integer or $p+|\gamma'|{\in}\mathbb{N}$ and $p+|\gamma'|{=}2k-1$, $k{\in}\mathbb{N}$ then   \eqref{Res3} gives
$$
\underset{\lambda=-\frac{n+|\gamma|}{2}-k}{\rm res}(P_{\gamma,+}^{\lambda},\varphi)_\gamma=\frac{(-1)^{\frac{q{+}|\gamma''|}{2}}}{2^{n+2k}k!}\frac{\prod\limits_{i{=}1}^n\Gamma\left(\frac{\gamma_i{+}1}{2}\right)}{\Gamma\left(\frac{n{+}|\gamma|}{2}+k\right)}(L_B^k\delta_\gamma(x),\varphi)_\gamma.
$$
This completes the proof of Theorem 2.
\end{proof}

\begin{theorem}
  If $n{+}|\gamma|$ is even and $p+|\gamma'|$ and $q+|\gamma''|$ are also even, $k{\in}\mathbb{N}{\cup}\{0\}$, then function функция $P_{\gamma,+}^\lambda$ has a simple pole in $\lambda{=}{-}\frac{n{+}|\gamma|}{2}{-}k$
with residue
$$
\underset{\lambda{=}{-}\frac{n{+}|\gamma|}{2}{-}k}{\rm res}P_{\gamma,+}^\lambda{=}\frac{1}{\Gamma\left(\frac{n+|\gamma|}{2}+k\right)}\biggl[(-1)^{\frac{n+|\gamma|}{2}+k-1}\delta_{\gamma,1}^{\left(\frac{n+|\gamma|}{2}+k-1\right)}(P)+
$$
$$
+\frac{(-1)^{\frac{q+|\gamma''|}{2}}}{2^{2k}k!}\prod\limits_{i=1}^n\Gamma\left(\frac{\gamma_i+1}{2}\right)\,L_B^k\delta_\gamma(x)\biggr].
$$

If $p{+}|\gamma'|$ and $q{+}|\gamma''|$ are not integer or $p{+}|\gamma'|,q{+}|\gamma''|{\in}\mathbb{N}$ and $p{+}|\gamma'|=2m-1,$ $q{+}|\gamma''|{=}2k-1$, $m,k{\in}\mathbb{N}$ then function $P_{\gamma,+}^\lambda$ a pole of order two at $\lambda{=}{-}\frac{n{+}|\gamma|}{2}{-}k$. Coefficients
$c_{-2}^{(k)}$ and $c_{-1}^{(k)}$ of expansion of function $P_{\gamma,+}^\lambda$ in Laurent series at $\lambda=-\frac{n+|\gamma|}{2}-k$ are expressed by the formulas
$$
c_{-1}^{(0)}=\frac{1}{\Gamma\left(\frac{n+|\gamma|}{2}+k\right)}\biggl[(-1)^{\frac{n+|\gamma|}{2}+k-1}\delta_{\gamma,1}^{\left(\frac{n+|\gamma|}{2}+k-1\right)}(P)+
\frac{(-1)^{\frac{n+|\gamma|}{2}-1}}{2^{2k}k!}\times
$$
$$\times\prod\limits_{i=1}^n\Gamma\left(\frac{\gamma_i{+}1}{2}\right)\sin\left(\frac{p{+}|\gamma'|}{2}\pi\right)\left(\psi\left(\frac{p{+}|\gamma'|}{2}\right){-}\psi\left(\frac{n{+}|\gamma|}{2}\right)\right)L_B^k\delta_\gamma(x)\biggr],
$$
$$
c_{-2}^{(k)}=(-1)^{\frac{n+|\gamma|}{2}+1}\frac{\sin\frac{\pi (p+|\gamma'|)}{2}\prod\limits_{i=1}^n\Gamma\left(\frac{\gamma_i+1}{2}\right)}{2^{n+2k}k!
\pi\Gamma\left(\frac{n+|\gamma|+k}{2}\right)}L_B^k\delta_\gamma(x),
$$
where $\psi(x)=\frac{\Gamma'(x)}{\Gamma(x)}$.
\end{theorem}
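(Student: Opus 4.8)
The plan is to analyse \eqref{Seq2Eq16} as in the proofs of Theorems~1 and~2, but now keeping track of the fact that for $n+|\gamma|$ even the two series of poles \eqref{Seriya1} and \eqref{Seriya2} \emph{coincide} at $\lambda_0=-\frac{n+|\gamma|}{2}-k$. Write $\ell=\frac{n+|\gamma|}{2}+k\in\mathbb N$, so that $\lambda_0=-\ell$ and $\Phi(\lambda,u)$ itself has the simple pole \eqref{Seq2Eq17} there; splitting $\Phi(\lambda,u)=\dfrac{\Phi_0(u)}{\lambda+\ell}+\Phi_1(\lambda,u)$ with $\Phi_0(u)=\mathop{\mathrm{res}}_{\lambda=-\ell}\Phi(\lambda,u)$ and $\Phi_1$ holomorphic at $\lambda_0$, formula \eqref{Seq2Eq16} becomes $(P_{\gamma,+}^\lambda,\varphi)_\gamma=\dfrac{J_0(\lambda)}{\lambda+\ell}+J_1(\lambda)$, where $J_i(\lambda)=\int_0^\infty u^{\lambda+\frac{n+|\gamma|}{2}-1}\Phi_i(\lambda,u)\,du$. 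By the same computation that yields \eqref{Res2}, each $J_i$ is, near $\lambda_0$, holomorphic except for at most a simple pole coming from the behaviour of the integrand as $u\to0$, with residue $\tfrac1{k!}\partial_u^k$ of the relevant smooth factor at $u=0$. Hence $(P_{\gamma,+}^\lambda,\varphi)_\gamma$ has at $\lambda_0$ a pole of order at most two, with $c_{-2}^{(k)}=\mathop{\mathrm{res}}_{\lambda=\lambda_0}J_0(\lambda)=\tfrac1{k!}\,\partial_u^k\Phi_0(u)\big|_{u=0}$ and $c_{-1}^{(k)}=\big(\text{finite part of }J_0\text{ at }\lambda_0\big)+\mathop{\mathrm{res}}_{\lambda=\lambda_0}J_1(\lambda)$.

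Next I would make the substitution $v=tu$ inside \eqref{Seq2Eq17}, exactly as in the passage from \eqref{Seq2Eq23} to \eqref{For1}, obtaining $\Phi_0(u)=c\,u^{\frac{p+|\gamma'|}{2}+k}\big[\partial_v^{\,\ell-1}\big(v^{\frac{q+|\gamma''|}{2}-1}\psi_1(u,v)\big)\big]_{v=u}$ with an explicit constant $c$. If $p+|\gamma'|$ and $q+|\gamma''|$ are even, then $v^{\frac{q+|\gamma''|}{2}-1}$ is a polynomial, $\Phi_0$ vanishes at $u=0$ to order $\ge\frac{p+|\gamma'|}{2}+k>k$, so $c_{-2}^{(k)}=0$ and the pole is simple; moreover, after $u=r^2$, $v=s^2$, the finite part of $J_0$ at $\lambda_0$ is (a constant times) the (regularized) integral defining $(\delta_{\gamma,1}^{(\ell-1)}(P),\varphi)_\gamma$ in \eqref{Seq1Eq08}, \eqref{Seq2Eq24} — this produces the $\delta_{\gamma,1}^{(\frac{n+|\gamma|}{2}+k-1)}(P)$ term — while $\mathop{\mathrm{res}}_{\lambda_0}J_1$ reduces, by Taylor‑expanding $\Phi_1(\lambda_0,\cdot)$ at $u=0$ and evaluating the Beta‑function ratios $\Gamma(\lambda+1)\Gamma(c)/\Gamma(\lambda+1+c)$ at $\lambda_0$, to $L_B^k\varphi(0)$ multiplied by $\prod_{i=1}^n\Gamma(\tfrac{\gamma_i+1}{2})$ (assembled from $\psi_1(0,0)=\varphi(0)|S_1^+(p)|_{\gamma'}|S_1^+(q)|_{\gamma''}$ and \eqref{Plosch}), exactly as in the proof of Theorem~2 via \eqref{Seq2Eq28} — this produces the $L_B^k\delta_\gamma$ term. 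If instead $p+|\gamma'|$, $q+|\gamma''|$ are non‑integer (or of the form $2m-1$), Leibniz's rule brings in factors $\Gamma(\tfrac{q+|\gamma''|}{2})/\Gamma(\tfrac{q+|\gamma''|}{2}-i)$ that are finite but non‑zero, $\Phi_0$ no longer vanishes to order $>k$ at $u=0$, and $c_{-2}^{(k)}\ne0$: now the reflection formula turns the reciprocal Gamma factor into $\sin(\tfrac{p+|\gamma'|}{2}\pi)$ (using $\tfrac{q+|\gamma''|}{2}=\tfrac{n+|\gamma|}{2}-\tfrac{p+|\gamma'|}{2}$), giving the stated $c_{-2}^{(k)}$, and expanding the relevant ratio of three Gamma functions one order further at the coalescing pole introduces the digamma difference $\psi(\tfrac{p+|\gamma'|}{2})-\psi(\tfrac{n+|\gamma|}{2})$ that multiplies $L_B^k\delta_\gamma$ in $c_{-1}^{(k)}$, while the finite part of $J_0$ again supplies the $\delta_{\gamma,1}^{(\ell-1)}(P)$ term.

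To organise the dependence on $k$ one can alternatively iterate \eqref{EqNew}: there $(P_{\gamma,+}^\lambda,\varphi)_\gamma=g(\lambda)\,(P_{\gamma,+}^{\lambda+k},L_B^k\varphi)_\gamma$ with $g(\lambda)^{-1}=2^{2k}(\lambda+1)\cdots(\lambda+k)(\lambda+\tfrac{n+|\gamma|}{2})\cdots(\lambda+\tfrac{n+|\gamma|}{2}+k-1)$, and at $\lambda_0$ the factor $g$ is regular and non‑zero with $g(\lambda_0)^{-1}=2^{2k}k!\,\Gamma(\tfrac{n+|\gamma|}{2}+k)/\Gamma(\tfrac{n+|\gamma|}{2})$, so the Laurent data at $\lambda_0$ is obtained from that of $(P_{\gamma,+}^{\mu},L_B^k\varphi)_\gamma$ at $\mu=-\tfrac{n+|\gamma|}{2}$ by multiplication by $g$ — which, not being constant, also contributes $g'(\lambda_0)\,c_{-2}^{(0)}[L_B^k\varphi]$ to $c_{-1}^{(k)}$ in the double‑pole case; combined with the explicit $k=0$ constant $\tfrac{(-1)^{q+|\gamma''|/2}}{2^{n}}\prod\Gamma(\tfrac{\gamma_i+1}{2})/\Gamma(\tfrac{n+|\gamma|}{2})$ from \eqref{Res3} this yields the $\tfrac1{\Gamma(\frac{n+|\gamma|}{2}+k)}$ and $\tfrac1{2^{2k}k!}$ factors, and one checks consistency with Theorems~1 and~2 (in the even case $\sin(\tfrac{p+|\gamma'|}{2}\pi)=0$, which collapses the order‑two pole to a simple one). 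The main obstacle is the bookkeeping in the non‑integer case: one must extract \emph{both} Laurent coefficients at $\lambda_0$ of ratios $\Gamma(\lambda+1)\Gamma(c)/\Gamma(\lambda+1+c)$ at a point where $\Gamma(\lambda+1)$ and $\Gamma(\lambda+1+c)$ both have a simple pole — this is the origin of $\sin(\tfrac{p+|\gamma'|}{2}\pi)$ and of the digamma difference — and verify that the combinatorial constants are exactly those making the $u$‑Taylor series of $\psi_1$ at the origin collapse to $L_B^k\delta_\gamma(x)$; by comparison the reduction to $k=0$ and the identification of the $\delta_{\gamma,1}^{(\cdot)}(P)$ piece via \eqref{Seq2Eq24} are routine.
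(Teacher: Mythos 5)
Your proposal is correct and follows essentially the same route as the paper: split $\Phi(\lambda,u)$ into its singular and regular parts at the coalescing pole, read off $c_{-2}^{(k)}$ and $c_{-1}^{(k)}$ from the two resulting integrals, identify the $\delta_{\gamma,1}^{(\cdot)}(P)$ piece via the substitution $v=tu$ and \eqref{Seq2Eq24}, extract the $\sin$ and digamma constants from the Gamma/Beta expansions at $u=0$ using the reflection formula, and pass to general $k$ through \eqref{EqNew}. Your two refinements --- the direct order-of-vanishing argument giving $\Phi_0^{(k)}(0)=0$, hence $c_{-2}^{(k)}=0$, when $p+|\gamma'|$ and $q+|\gamma''|$ are even, and the remark that applying \eqref{EqNew} at a double pole also feeds a $g'(\lambda_0)\,c_{-2}^{(0)}$ cross-term into $c_{-1}^{(k)}$ --- are sound and make explicit two points the paper passes over silently.
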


\begin{proof}
Let $n{+}|\gamma|$ is even and $\lambda{=}{-}\frac{n{+}|\gamma|}{2}{-}k,$ $k{\in}\mathbb{N}{\cup}\{0\}.$
We express this $(P_{\gamma,+}^\lambda,\varphi)_\gamma$ in the form
\begin{equation}\label{Razl1}
(P_{\gamma,+}^\lambda,\varphi)_\gamma{=}\frac{1}{\lambda{+}k}\int\limits_{0}^\infty
u^{\lambda{+}\frac{n{+}|\gamma|}{2}{-}1}\Phi_0(u)du{+}\int\limits_{0}^\infty
u^{\lambda{+}\frac{n{+}|\gamma|}{2}{-}1}\Phi_1(\lambda,u)du,
\end{equation}
where $\Phi_0(u){=}\underset{\lambda{=}{-}\frac{n{+}|\gamma|}{2}{-}k}{\rm res}\Phi(\lambda,u)$ and	 $\Phi_1(\lambda,u)$ is a regular at $\lambda{=}{-}\frac{n{+}|\gamma|}{2}{-}k$ function. By virtue of the proposal each integral in \eqref{Razl1} may have at $\lambda{=}{-}\frac{n{+}|\gamma|}{2}{-}k$ a simple pole therefore function $(P_{\gamma,+}^\lambda,\varphi)_\gamma$ may have a pole of order two at
 $\lambda{=}{-}\frac{n{+}|\gamma|}{2}{-}k$.
In the neighborhood of such a point we may expand $P_{\gamma,+}^\lambda$ in the Laurent
series
$$
P_{\gamma,+}^\lambda=\frac{c_{-2}^{(k)}}{\left(\lambda+\frac{n+|\gamma|}{2}+k\right)^2}+\frac{c_{-1}^{(k)}}{\lambda+\frac{n+|\gamma|}{2}+k}+...\, .
$$
Let us find $c_{-1}^{(k)}$, $c_{-2}^{(k)}$.
We have
$$
(c_{-2}^{(k)},\varphi)_\gamma=\underset{\lambda=-\frac{n+|\gamma|}{2}-k}{\rm res}\int\limits_{0}^\infty
u^{\lambda+\frac{n+|\gamma|}{2}-1}\Phi_0(u)du=\frac{1}{k!}\Phi_0^{(k)}(0).
$$
If $k=0$ then $c_{-2}^{(0)}=\Phi_0(0).$
According to \eqref{Phi}
$$
\Phi_0(0)=\frac{1}{4}\psi_1(0,0)\underset{\lambda=-\frac{n+|\gamma|}{2}}{\rm res}\int\limits_{0}^1 (1-t)^\lambda t^{\frac{q+|\gamma''|-2}{2}}dt=
$$
$$
=\psi_1(0,0)\underset{\lambda=-\frac{n+|\gamma|}{2}}{\rm res}\frac{\Gamma\left(\frac{q+|\gamma''|}{2}\right)\Gamma(\lambda+1)}{4\Gamma\left(\lambda+\frac{q+|\gamma''|}{2}+1\right)}.
$$
Considering that $\psi_1(0,0){=}\varphi(0)|S_1^+(p)|_{\gamma'}|S_1^+(q)|_{\gamma''}$ where $|S_1^+(p)|_{\gamma'}$ and $|S_1^+(q)|_{\gamma''}$ were determined in \eqref{Plosch} we obtain
$$
(c_{-2}^{(0)},\varphi)_\gamma=$$
$$=\frac{(-1)^{\frac{n+|\gamma|}{2}+1}B\left(\frac{p+|\gamma'|}{2},\frac{q+|\gamma''|}{2}\right)}{4\pi}\sin\frac{\pi (p+|\gamma'|)}{2}|S_1^+(p)|_{\gamma'}|S_1^+(q)|_{\gamma''}\varphi(0).
$$
When $p+|\gamma'|$ is even (in this case $q+|\gamma''|$ is also even) we have $c_{-2}^{(k)}=0$ i.e. function $(P_{\gamma,+}^\lambda,\varphi)_\gamma$ has a simple pole at  $\lambda=-\frac{n+|\gamma|}{2}$. If  $p+|\gamma'|$ is not integer or $p+|\gamma'|{\in}\mathbb{N}$ and $p+|\gamma'|{=}2k-1$, $k{\in}\mathbb{N}$ then
$$
c_{-2}^{(0)}=(-1)^{\frac{n+|\gamma|}{2}+1}\frac{\sin\frac{\pi (p+|\gamma'|)}{2}\prod\limits_{i=1}^n\Gamma\left(\frac{\gamma_i+1}{2}\right)}{2^n
\pi\Gamma\left(\frac{n+|\gamma|}{2}\right)}\delta_\gamma(x).
$$
As well as in Theorem 2 we obtain that if
$p+|\gamma'|$ and $q+|\gamma''|$ are even then function $P_{\gamma,+}^\lambda$ has a simple pole at $\lambda=-\frac{n+|\gamma|}{2}-k$. If $p+|\gamma'|$ and $q+|\gamma''|$ are not integer or $p{+}|\gamma'|,q{+}|\gamma''|{\in}\mathbb{N}$ and $p{+}|\gamma'|{=}2m{-}1,$ $q{+}|\gamma''|{=}2k{-}1$, $m,k{\in}\mathbb{N}$ then
$$
c_{-2}^{(k)}=(-1)^{\frac{n+|\gamma|}{2}+1}\frac{\sin\frac{\pi (p+|\gamma'|)}{2}\prod\limits_{i=1}^n\Gamma\left(\frac{\gamma_i+1}{2}\right)}{2^{n+2k}k!
\pi\Gamma\left(\frac{n+|\gamma|+k}{2}\right)}L_B^k\delta_\gamma(x).
$$

Let's find $c_{-1}^{(k)}$. We have
$$
(c_{-1}^{(k)},\varphi)=\int\limits_0^\infty u^{-k-1}\Phi_0(u)du+$$
$$+\underset{\lambda=-\frac{n+|\gamma|}{2}-k}{\rm res}\int\limits_0^\infty  u^{\lambda+\frac{n+|\gamma|}{2}-1}\Phi_1\left(-\frac{n+|\gamma|}{2}-k,u\right)du.
$$
Since $
\Phi_0(u)=\underset{\lambda=-k}{\rm res}\Phi(\lambda,u)$
then using the formulas \eqref{Seq2Eq17} and  \eqref{Seq2Eq24} we obtain
$$
\int\limits_0^\infty u^{-k-1}\Phi_0(u)du=\frac{(-1)^{\frac{n+|\gamma|}{2}+k-1}}{\Gamma\left(\frac{n+|\gamma|}{2}+k-1\right)}\left(\delta_{\gamma,1}^{(\frac{n+|\gamma|}{2}+k-1)}(P),\varphi\right)_\gamma.
$$
Thus
$$
\underset{\lambda=-\frac{n+|\gamma|}{2}-k}{\rm res}\int\limits_0^\infty  u^{\lambda+\frac{n+|\gamma|}{2}-1}\Phi_1\left(-\frac{n+|\gamma|}{2}-k,u\right)du=
$$
$$=\frac{1}{k!}\frac{\partial^k\Phi_1\left(-\frac{n+|\gamma|}{2}-k,u\right)}{\partial u^k}\biggr|_{u=0}=(\alpha^{(k)}_\gamma,\varphi)_\gamma
$$
and
$$
c_{-1}^{(k)}=\frac{(-1)^{\frac{n+|\gamma|}{2}+k-1}}{\Gamma\left(\frac{n+|\gamma|}{2}+k-1\right)}\delta_{\gamma,1}^{(\frac{n+|\gamma|}{2}+k-1)}(P)+\alpha^{(k)}_\gamma.
$$
For $k=0$ we obtain
$$
(\alpha^{(0)}_\gamma,\varphi)_\gamma=\Phi_1\left(-\frac{n+|\gamma|}{2},0\right).
$$
In order to find $\Phi_1\left(-\frac{n+|\gamma|}{2},0\right)$ we consider  $\Phi(\lambda,0)$. Using \eqref{Eq1}, \eqref{Eq2} and \eqref{Plosch} we obtain
$$
\Phi(\lambda,0)=\varphi(0)\frac{\Gamma(\lambda+1)\prod\limits_{i=1}^n\Gamma\left(\frac{\gamma_i+1}{2}\right)}{2^n\Gamma\left(\frac{p+|\gamma'|}{2}\right)\Gamma\left(\lambda+\frac{q+|\gamma''|}{2}+1\right)}.
$$
Taking into account the formula $\Gamma(1-x)\Gamma(x)=\frac{\pi}{\sin\pi x}$ we can write
$$
\Phi(\lambda,0)=\frac{\sin\pi\left(\lambda+\frac{q+|\gamma''|}{2}\right)}{\sin\pi\lambda}\frac{\Gamma\left(-\lambda-\frac{q+|\gamma''|}{2}\right)\prod\limits_{i=1}^n\Gamma\left(\frac{\gamma_i+1}{2}\right)}{\Gamma\left(\frac{p+|\gamma'|}{2}\right)\Gamma(-\lambda)}\varphi(0).
$$
If $p{+}|\gamma'|$ and $q{+}|\gamma''|$ are even then $$\lim\limits_{\lambda\rightarrow-\frac{n+|\gamma|}{2}}\frac{\sin\pi\left(\lambda{+}\frac{q{+}|\gamma''|}{2}\right)}{\sin\pi\lambda}{=}(-1)^{\frac{q{+}|\gamma''|}{2}},$$ hence function $\Phi(\lambda,0)$ is regular at $\lambda{=}{-}\frac{n{+}|\gamma|}{2}$ and
$$
\Phi_1\left(-\frac{n+|\gamma|}{2},0\right){=}\Phi\left(-\frac{n+|\gamma|}{2}\right)
$$ whence
$$
(\alpha^{(0)}_\gamma,\varphi)_\gamma=(-1)^{\frac{q+|\gamma''|}{2}}\frac{\prod\limits_{i=1}^n\Gamma\left(\frac{\gamma_i+1}{2}\right)}{\Gamma\left(\frac{n+|\gamma|}{2}\right)}\varphi(0).
$$
If $p{+}|\gamma'|$ and $q{+}|\gamma''|$ are not integer or  $p{+}|\gamma'|,q{+}|\gamma''|{\in}\mathbb{N}$ and $p{+}|\gamma'|{=}2m{-}1,$ $q{+}|\gamma''|{=}2k{-}1$, $m,k{\in}\mathbb{N}$ then $\Phi(\lambda,0)$ has a pole at $\lambda=-\frac{n+|\gamma|}{2}$. In this case
$$
(\alpha^{(0)}_\gamma,\varphi)_\gamma=\Phi_1\left(-\frac{n+|\gamma|}{2},0\right)=(-1)^{\frac{n+|\gamma|}{2}-1}\prod\limits_{i=1}^n\Gamma\left(\frac{\gamma_i+1}{2}\right)\times
$$
$$
\times\frac{\sin\left(\frac{p+|\gamma'|}{2}\pi\right)\left(\psi\left(\frac{p+|\gamma'|}{2}\right)-\psi\left(\frac{n+|\gamma|}{2}\right)\right)}{\Gamma\left(\frac{n+|\gamma|}{2}\right)}\,\varphi(0),
$$
where $
\psi(x)=\frac{\Gamma'(x)}{\Gamma(x)}.$
We obtain
$$
c_{-1}^{(0)}=\frac{1}{\Gamma\left(\frac{n+|\gamma|}{2}\right)}\left[(-1)^{\frac{n+|\gamma|}{2}-1}\delta_{\gamma,1}^{\left(\frac{n+|\gamma|}{2}-1\right)}(P)+\theta\delta_\gamma(x)\right],
$$
with a value
$$\theta{=}(-1)^{\frac{q+|\gamma''|}{2}}\prod\limits_{i=1}^n\Gamma\left(\frac{\gamma_i+1}{2}\right)$$ if $p{+}|\gamma'|$ and $q{+}|\gamma''|$ are even.
If $p{+}|\gamma'|$ and $q{+}|\gamma''|$ are not integer or $p{+}|\gamma'|,q{+}|\gamma''|{\in}\mathbb{N}$ and $p{+}|\gamma'|{=}2m{-}1,$ $q{+}|\gamma''|{=}2k{-}1$, $m,k{\in}\mathbb{N}$ then
$$\theta=(-1)^{\frac{n+|\gamma|}{2}-1}\prod\limits_{i=1}^n\Gamma\left(\frac{\gamma_i{+}1}{2}\right)\sin\left(\frac{p{+}|\gamma'|}{2}\pi\right)\times
$$
$$\times\left(\psi\left(\frac{p+|\gamma'|}{2}\right){-}\psi\left(\frac{n+|\gamma|}{2}\right)\right).$$
Finally, in order to obtain $c_{-1}^{(k)}$ for arbitrary $k$, we again use the
 formula \eqref{EqNew}. This proves the desired result.
\end{proof}



\vskip 1cm
Elina Leonidovna Shishkina\\
Voronezh State University\\
1 Universitetskaya pl., Voronezh 394006, Russia\\
E-mail: ilina\_dico@mail.ru

\end{document}